\newtheorem{thm}{Theorem}[section]
\newtheorem*{thm*}{Theorem}
\newtheorem{tthm}{Theorem}
\newtheorem{cor}[thm]{Corollary}
\newtheorem{lem}[thm]{Lemma}
\theoremstyle{definition}
\newtheorem{dfn}[thm]{Definition}
\newtheorem*{dfn*}{Definition}
\newtheorem{rem}[thm]{Remark}
\newtheorem{ques}[thm]{Question}
\newtheorem{ex}[thm]{Example}
\newtheorem{conv}[thm]{Convention}
\theoremstyle{remark}
\newtheorem*{ac}{Acknowledgments}
\newtheorem{claim}{Claim}
\newtheorem*{claim*}{Claim}
\renewcommand{\qedsymbol}{$\blacksquare$}
\numberwithin{equation}{thm}
\def\Ext{\operatorname{\mathsf{Ext}}}
\def\grade{\operatorname{\mathsf{grade}}}
\def\syz{\mathsf{\Omega}}
\def\mod{\operatorname{\mathsf{mod}}}
\def\op{\mathsf{op}}
\def\s{\mathsf{S}}
\def\height{\operatorname{\mathsf{ht}}}
\def\spec{\operatorname{\mathsf{Spec}}}
\def\depth{\operatorname{\mathsf{depth}}}
\def\X{\mathcal{X}}
\def\m{\mathfrak{m}}
\def\p{\mathfrak{p}}
\def\q{\mathfrak{q}}
\def\r{\mathfrak{r}}
\def\ss{\mathfrak{s}}
\def\dim{\operatorname{\mathsf{dim}}}
\def\Hom{\operatorname{\mathsf{Hom}}}
\def\tr{\mathsf{Tr}}
\def\lhom{\operatorname{\mathsf{\underline{Hom}}}}
\def\lmod{\operatorname{\mathsf{\underline{mod}}}}
\def\H{\operatorname{\mathsf{H}}}
\def\one{\mathbf{1}}
\def\R{T}
\def\L{S}
\def\proj{\operatorname{\mathsf{proj}}}
\def\d{\mathsf{D}}
\def\ann{\operatorname{\mathsf{ann}}}
\def\ass{\operatorname{\mathsf{Ass}}}
\def\assh{\operatorname{\mathsf{Assh}}}
\def\v{\mathsf{V}}
\def\C{\mathcal{C}}
\def\Y{\mathcal{Y}}
\def\D{\mathcal{D}}
\def\Im{\operatorname{\mathsf{Im}}}
\def\Min{\operatorname{\mathsf{Min}}}
\begin{document}
%\allowdisplaybreaks
\setlength{\baselineskip}{14.84pt}
\title[Extension closed syzygies and locally Gorenstein rings]{Extension closedness of syzygies and\\
local Gorensteinness of commutative rings}
%\date{\today}
\author{Shiro Goto}
\address{Department of Mathematics, School of Science and Technology, Meiji University, 1-1-1 Higashi-mita, Tama-ku, Kawasaki 214-8571, Japan}
\email{goto@math.meiji.ac.jp}
\author{Ryo Takahashi}
\address{Graduate School of Mathematics, Nagoya University, Furocho, Chikusaku, Nagoya, 464-8602, Japan}
\email{takahashi@math.nagoya-u.ac.jp}
\urladdr{http://www.math.nagoya-u.ac.jp/~takahashi/}
\thanks{2010 {\em Mathematics Subject Classification.} 13D02, 16E05, 13C60, 16D90, 13H10}
\thanks{{\em Key words and phrases.} syzygy, extension closed subcategory, Gorenstein ring, Serre's condition}
\thanks{The first author was partly supported by JSPS Grant-in-Aid for Scientific Research (C) 25400051. The second author was partly supported by JSPS Grant-in-Aid for Scientific Research (C) 25400038}
%\dedicatory{Dedicated to Professor Craig Huneke on the occasion of his sixtieth birthday}
\begin{abstract}
We refine a well-known theorem of Auslander and Reiten about the extension closedness of $n$th syzygies over noether algebras.
Applying it, we obtain the converse of a celebrated theorem of Evans and Griffith on Serre's condition $(\s_n)$ and the local Gorensteiness of a commutative ring in height less than $n$.
This especially extends a recent result of Araya and Iima concerning a Cohen--Macaulay local ring with canonical module to an arbitrary local ring.
\end{abstract}
\maketitle
%\tableofcontents
%%%%%%%%%%%%%%%%%%%%%%%%%%%%%%%%%%%%%%%%%%%%%%%%%%%%%%%%%%%%%%%
\section{Introduction}\label{sect1}

In this paper we are interested in the following theorem.

\begin{thm}[Evans--Griffith, Araya--Iima]\label{egai}
Let $n\ge0$ be an integer.
Let $R$ be a commutative noetherian ring satisfying Serre's condition $(\s_n)$.
Consider the following conditions.
\begin{enumerate}[\rm(1)]
\item
One has $\syz^n(\mod R)=\s_n(R)$.
\item
The local ring $R_\p$ is Gorenstein for all $\p\in\spec R$ with $\height\p<n$.
\end{enumerate}
Then the implication {\rm(2)} $\Rightarrow$ {\rm(1)} holds.
The opposite implication {\rm(1)} $\Rightarrow$ {\rm(2)} holds if $R$ is a Cohen--Macaulay local ring with canonical module.
\end{thm}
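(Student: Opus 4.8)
The plan is to prove the two implications separately, as they are of quite different character. For $(2)\Rightarrow(1)$ I would establish the two inclusions of subcategories. The inclusion $\syz^n(\mod R)\subseteq\s_n(R)$ needs no Gorenstein hypothesis: given an $n$th syzygy $M$, I pick an exact sequence $0\to M\to F_{n-1}\to\cdots\to F_0$ with the $F_i$ free, break it into short exact sequences, and apply the depth lemma repeatedly to get $\depth M_\p\ge\min\{n,\depth R_\p\}$ for every $\p\in\spec R$; since $R\in\s_n(R)$ gives $\depth R_\p\ge\min\{n,\dim R_\p\}$, this yields $\depth M_\p\ge\min\{n,\dim R_\p\}$, that is, $M\in\s_n(R)$. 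For the reverse inclusion $\s_n(R)\subseteq\syz^n(\mod R)$ I would invoke the theorem of Evans and Griffith: hypothesis $(2)$ forces $R_\p$ to be Gorenstein whenever $\depth R_\p<n$ (because $\depth R_\p\le\dim R_\p=\height\p$), and any $M\in\s_n(R)$ satisfies $\depth M_\p\ge\min\{n,\dim R_\p\}\ge\min\{n,\depth R_\p\}$; these are exactly the hypotheses under which Evans and Griffith conclude that $M$ is an $n$th syzygy. Were I to reconstruct their theorem, I would induct on $n$ and prove that the Auslander--Bridger defect modules $\Ext^i_R(\tr M,R)$ vanish for $1\le i\le n$, which is equivalent to $M$ being $n$-torsionfree and hence an $n$th syzygy; the vanishing is local, and at a prime $\p$ minimal in the support of such a defect module the grade estimate $\grade\Ext^i_R(\tr M,R)\ge i$ together with the inductive hypothesis would constrain $\depth R_\p$ so that $R_\p$ lands in the Gorenstein locus, where local duality rewrites the defect as a local cohomology module of $\tr M$ that the Serre condition on $M$ annihilates. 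Making this numerology close is precisely the difficulty resolved by Evans--Griffith, so in practice I would cite their result.

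For $(1)\Rightarrow(2)$ under the hypothesis that $R$ is Cohen--Macaulay local with canonical module $\omega$ --- so that Serre's condition $(\s_n)$ is automatic --- I would argue by contradiction. Suppose $R_\p$ is not Gorenstein for some $\p$ with $h:=\height\p<n$, so $n\ge1$. Since $\omega$ is maximal Cohen--Macaulay, $\omega\in\s_n(R)$, hence $\omega\in\syz^n(\mod R)$ by $(1)$; localizing at $\p$, the module $\omega_\p$ is a direct summand of some $n$th syzygy $\syz^nY$ over $R_\p$. I write $0\to\syz^nY\to G\to C\to 0$ with $G$ free and $C=\syz^{n-1}Y$ the preceding syzygy over $R_\p$; since $n-1\ge h$ and every $k$th syzygy with $k\ge h$ over a Cohen--Macaulay local ring of dimension $h$ is maximal Cohen--Macaulay, $C$ is maximal Cohen--Macaulay. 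Applying the canonical duality $(-)^\dagger:=\Hom_{R_\p}(-,\omega_\p)$ and using $\Ext^1_{R_\p}(C,\omega_\p)=0$, I obtain an exact sequence $0\to C^\dagger\to G^\dagger\to(\syz^nY)^\dagger\to 0$ with $G^\dagger$ free; as $\omega_\p$ is a summand of $\syz^nY$ and $\omega_\p^\dagger\cong R_\p$, the module $R_\p$ is a summand of $(\syz^nY)^\dagger$, so composing $G^\dagger\twoheadrightarrow(\syz^nY)^\dagger$ with a projection onto $R_\p$ gives a surjection $G^\dagger\to R_\p$, which splits since $R_\p$ is projective; thus $R_\p$ is a direct summand of $G^\dagger$. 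Dualizing once more with $(-)^\dagger$ and using $\Hom_{R_\p}(\omega_\p,\omega_\p)\cong R_\p$, I conclude that $\omega_\p$ is a direct summand of a free $R_\p$-module, hence free; but a Cohen--Macaulay local ring with free canonical module is Gorenstein, contradicting the choice of $\p$.

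The step I expect to be the main obstacle is the inclusion $\s_n(R)\subseteq\syz^n(\mod R)$ in $(2)\Rightarrow(1)$: a self-contained proof amounts to the full Evans--Griffith syzygy machinery, so the reasonable course is to quote it. By contrast $\syz^n(\mod R)\subseteq\s_n(R)$ is a routine depth-lemma computation, and $(1)\Rightarrow(2)$ is a short argument built from the duality $\Hom_{R_\p}(-,\omega_\p)$ on maximal Cohen--Macaulay modules and the stabilization of finite syzygies over Cohen--Macaulay local rings.
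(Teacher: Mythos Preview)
The paper does not give its own proof of this theorem: it attributes $(2)\Rightarrow(1)$ to Evans--Griffith \cite[Theorem 3.8]{EG} and $(1)\Rightarrow(2)$ to Araya--Iima \cite[Theorem 2.2]{AI}, treating both as background. Your plan for $(2)\Rightarrow(1)$ matches this: you correctly isolate the routine inclusion $\syz^n(\mod R)\subseteq\s_n(R)$ via the depth lemma and cite Evans--Griffith for the hard inclusion, with a reasonable sketch of how the $n$-torsionfree machinery would go.

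Your argument for $(1)\Rightarrow(2)$ is in essence correct and is the natural canonical-module argument, but it contains a slip and an unnecessary detour. The slip: $G^\dagger=\Hom_{R_\p}(G,\omega_\p)\cong\omega_\p^{\oplus r}$ is \emph{not} free---that is precisely what you are trying to contradict. Fortunately you never actually use that claim; your subsequent steps only need $G^\dagger\cong\omega_\p^{\oplus r}$, and the double dualization then correctly lands $\omega_\p$ inside $R_\p^{\oplus r}$. The detour: once you have the short exact sequence $0\to\omega_\p\to G\to C\to0$ with $G$ free over $R_\p$ and $C$ maximal Cohen--Macaulay, you are already done, because $\Ext^1_{R_\p}(C,\omega_\p)=0$ makes the sequence \emph{split}, so $\omega_\p$ is a summand of the free module $G$ directly. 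There is no need to dualize, pass through $G^\dagger$, and dualize back. (Also, since localization preserves exactness and freeness, $\omega_\p$ is itself an $n$th syzygy over $R_\p$, not merely a summand of one, so you may take $\syz^nY=\omega_\p$ from the outset.)
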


Let us explain some notation.
For a right noetherian ring $\Lambda$, we denote by $\mod\Lambda$ the category of finitely generated right $\Lambda$-modules, and by $\syz^n(\mod\Lambda)$ the full subcategory of $n$th syzygies.
For a commutative noetherian ring $R$, we denote by $\s_n(R)$ the full subcategory of $\mod R$ consisting of modules satisfying Serre's condition $(\s_n)$.
The first assertion of the theorem is a celebrated result of Evans and Griffith \cite[Theorem 3.8]{EG}, while the second assertion has recently been shown by Araya and Iima \cite[Theorem 2.2]{AI}.

It is natural to ask whether the implication (1) $\Rightarrow$ (2) in Theorem \ref{egai} holds for arbitrary commutative noetherian rings, and the main purpose of this paper is to give an answer to this question.
Since $\s_n(R)$ is an extension closed subcategory of $\mod R$, the equality $\syz^n(\mod R)=\s_n(R)$ implies the extension closedness of $\syz^n(\mod R)$.
We thus study when the subcategory $\syz^n(\mod R)$ is extension closed, rather than when one has $\syz^n(\mod R)=\s_n(R)$.
The extension closedness of syzygies has been investigated over a noether algebra by Auslander and Reiten \cite[Theorem 0.1]{AR}.

\begin{thm}[Auslander--Reiten]\label{ar}
Let $\Lambda$ be a noether algebra.
Then the following are equivalent for each nonnegative integer $n$.
\begin{enumerate}[\rm(1)]
\item
$\syz^i(\mod\Lambda)$ is extension closed for all $1\le i\le n$.
\item
$\grade_\Lambda\Ext_{\Lambda^\op}^i(M,\Lambda)\ge i$ for all $1\le i\le n$ and $M\in\mod\Lambda^\op$.
\end{enumerate}
\end{thm}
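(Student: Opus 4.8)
The plan is to induct on $n$, the case $n=0$ being vacuous on both sides. Since conditions (1) and (2) are each a conjunction over $1\le i\le n$, it suffices to carry out the inductive step: assuming Theorem~\ref{ar} for $n-1$ — so that, for each $i<n$, extension closedness of $\syz^i(\mod\Lambda)$ and the grade estimate at $i$ may be used interchangeably and freely — prove that $\syz^n(\mod\Lambda)$ is extension closed if and only if $\grade_\Lambda\Ext_{\Lambda^\op}^n(M,\Lambda)\ge n$ for every $M\in\mod\Lambda^{\op}$. The machinery I would use is the Auslander--Bridger transpose $\tr\colon\mod\Lambda\to\mod\Lambda^{\op}$ together with four standard inputs: the fundamental exact sequence $0\to\Ext_{\Lambda^\op}^1(\tr W,\Lambda)\to W\to W^{\ast\ast}\to\Ext_{\Lambda^\op}^2(\tr W,\Lambda)\to0$, where $(-)^{\ast}=\Hom_\Lambda(-,\Lambda)$ and $(-)^{\ast\ast}=\Hom_{\Lambda^{\op}}((-)^{\ast},\Lambda)$; the six-term exact sequence $0\to Z^{\ast}\to Y^{\ast}\to X^{\ast}\to\tr Z\to\tr Y\to\tr X\to0$ attached to a short exact sequence $0\to X\to Y\to Z\to0$, obtained from the horseshoe lemma on projective presentations by dualizing; dimension shifting $\Ext_\Lambda^i(\syz^kL,\Lambda)\cong\Ext_\Lambda^{i+k}(L,\Lambda)$ for $i\ge1$; and the characterization that $Y\in\syz^n(\mod\Lambda)$ exactly when $Y$ embeds into a projective module $P$ with $P/Y\in\syz^{n-1}(\mod\Lambda)$.

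\textbf{(2) $\Rightarrow$ (1).} Let $0\to X\to Y\to Z\to0$ be exact with $X,Z\in\syz^n(\mod\Lambda)$; I want $Y\in\syz^n(\mod\Lambda)$. Fixing an embedding $X\hookrightarrow Q_X$ into a projective with $Q_X/X\in\syz^{n-1}(\mod\Lambda)$ and pushing the sequence out along it, one obtains $0\to Y\to V\to Q_X/X\to0$ and $0\to Q_X\to V\to Z\to0$; since $\syz^{n-1}(\mod\Lambda)$ is extension closed by induction it suffices to prove $V\in\syz^n(\mod\Lambda)$, and the point of this reduction is that the left-hand term is now projective, so that its transpose is projective and the ensuing chase is clean. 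Feeding $0\to Q_X\to V\to Z\to0$ into the six-term sequence, then into the fundamental sequence, and then applying $n-1$ steps of dimension shifting (invoking the inductive hypothesis at each stage), the requirement $V\in\syz^n(\mod\Lambda)$ becomes the vanishing of a single obstruction module, which turns out to be a subquotient of $\Ext_\Lambda^j(\Ext_{\Lambda^\op}^i(M,\Lambda),\Lambda)$ for suitable indices $j<i\le n$ and some $M\in\mod\Lambda^{\op}$. Hypothesis (2) is exactly the assertion that such modules vanish, so $V$, and hence $Y$, lies in $\syz^n(\mod\Lambda)$.

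\textbf{(1) $\Rightarrow$ (2).} I argue contrapositively: if (2) fails, I exhibit some $i\le n$ for which $\syz^i(\mod\Lambda)$ is not extension closed. Among the failures, choose $M\in\mod\Lambda^{\op}$ and indices $j<i\le n$ with $\Ext_\Lambda^j(\Ext_{\Lambda^\op}^i(M,\Lambda),\Lambda)\ne0$, with $i$ minimal and then $j$ minimal. Replacing $M$ by a syzygy so that $\Ext_{\Lambda^\op}^i(M,\Lambda)=\Ext_{\Lambda^\op}^1(N,\Lambda)$, a nonzero element of $\Ext_\Lambda^j(\Ext_{\Lambda^\op}^1(N,\Lambda),\Lambda)$ — after one more dimension shift — produces a nonzero homomorphism from some module to a projective that does not extend over the relevant larger module; running the fundamental and six-term sequences in reverse and gluing the two short exact sequences built from this datum along that homomorphism yields an exact sequence $0\to A\to B\to C\to0$ with $A,C\in\syz^i(\mod\Lambda)$ but $B\notin\syz^i(\mod\Lambda)$, contradicting (1).

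\textbf{Where the difficulty lies.} The main obstacle is the bookkeeping in the (2) $\Rightarrow$ (1) step: identifying the obstruction precisely as a subquotient of an iterated Ext of the shape $\Ext_\Lambda^j(\Ext_{\Lambda^\op}^i(M,\Lambda),\Lambda)$ — in particular keeping straight which module inside each $\Hom$ or $\Ext$ is a $\Lambda$-module and which is a $\Lambda^{\op}$-module as one chains the six-term sequences — and controlling the "up to projective summands" ambiguity inherent in $\syz^n(\mod\Lambda)$ throughout the chase. The construction in the (1) $\Rightarrow$ (2) step is the same diagram chase run backwards and is delicate for exactly the same reasons.
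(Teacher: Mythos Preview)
The paper does not contain its own proof of this theorem: Theorem~\ref{ar} is quoted in the introduction as a known result of Auslander and Reiten, with a citation to \cite[Theorem~0.1]{AR}, and is used as a black box (for $n=1$) in the proof of Theorem~\ref{main1}. The only argument the paper supplies toward Theorem~\ref{ar} is the Remark following Theorem~\ref{main1}, which recovers the implication (1)~$\Rightarrow$~(2) as a corollary of Theorem~\ref{main1} by a short induction on $n$; the implication (2)~$\Rightarrow$~(1) is not addressed anywhere in the paper.

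So there is essentially nothing to compare your proposal against. That said, two comments. First, what you have written is a strategy, not a proof: in the (2)~$\Rightarrow$~(1) direction you assert that the obstruction ``turns out to be a subquotient of $\Ext_\Lambda^j(\Ext_{\Lambda^\op}^i(M,\Lambda),\Lambda)$'' without identifying $i$, $j$, or $M$, and in the (1)~$\Rightarrow$~(2) direction the construction of the offending extension is left entirely to the phrase ``running the fundamental and six-term sequences in reverse.'' These are exactly the steps where the work lies, as you yourself note. Second, your route for (1)~$\Rightarrow$~(2) --- a direct contrapositive construction of a bad extension --- is genuinely different from the paper's route, which instead proves the sharper Theorem~\ref{main1} (that extension closedness of $\syz^n(\mod\Lambda)$ alone, together with the grade bound $\ge i-1$ for $i\le n$, forces $\grade_\Lambda\Ext_{\Lambda^\op}^n(M,\Lambda)\ge n$) via the adjoint pair $(\tr\syz^n\tr,\syz^n)$ on the stable category, minimal approximations, and a mapping-cone argument, and then deduces (1)~$\Rightarrow$~(2) by induction. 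If your aim is only Theorem~\ref{ar}, the original Auslander--Reiten argument (or the Auslander--Bridger equivalence between $n$th syzygies and $n$-torsionfree modules under the grade hypothesis, which makes (2)~$\Rightarrow$~(1) immediate since $n$-torsionfreeness is an Ext-vanishing condition) is more direct than either your sketch or the paper's detour through Theorem~\ref{main1}.
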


The first condition of this theorem is too strong for our purpose in that it requires the extension closedness of $i$th syzygies for {\em all} integers $i$ with $1\le i\le n$, while it is the extension closedness of $n$th syzygies that we want to deal with.

Our first main result is the following theorem on not-necessarily-commutative rings.
This in fact provides a refinement of the implication (1) $\Rightarrow$ (2) in Theorem \ref{ar}.

\begin{tthm}\label{im1}
Let $\Lambda$ be a noether algebra such that $\syz^n(\mod\Lambda)$ is extension closed.
Let $M$ be a finitely generated $\Lambda^\op$-module with $\grade_\Lambda\Ext_{\Lambda^\op}^i(M,\Lambda)\ge i-1$ for all $1\le i\le n$.
Then $\grade_\Lambda\Ext_{\Lambda^\op}^n(M,\Lambda)\ge n$.
\end{tthm}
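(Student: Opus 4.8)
The plan is to reduce the assertion to a handful of torsionlessness statements over $\Lambda$ and $\Lambda^{\op}$, and then to invoke the extension closedness of $\syz^n(\mod\Lambda)$ at a single point. Set $E_i:=\Ext^i_{\Lambda^\op}(M,\Lambda)$; the goal is $\Ext^j_\Lambda(E_n,\Lambda)=0$ for $0\le j\le n-1$. Fix a projective resolution $\cdots\to P_1\to P_0\to M\to0$ in $\mod\Lambda^\op$, apply $\Hom(-,\Lambda)$ (written $(-)^{*}$, on either side) to get a cochain complex $P_0^{*}\to P_1^{*}\to\cdots$ of projective $\Lambda$-modules with $\H^i\cong E_i$, and put $T_i:=\operatorname{coker}(P_{i-1}^{*}\to P_i^{*})$, so that $T_i\cong\tr\,\syz^{i-1}_{\Lambda^\op}M$ up to projective summands and $T_1=\tr M$. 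Chasing this complex produces, for all $i\ge1$, short exact sequences of $\Lambda$-modules
\[
0\to E_i\to T_i\to\syz_\Lambda T_{i+1}\to0
\qquad\text{and}\qquad
0\to\syz_\Lambda T_i\to\syz^2_\Lambda T_{i+1}\to E_i\to0,
\]
and, using $\tr\tr=\mathrm{id}$ up to projectives, one identifies $E_i$ with the kernel $\mathsf t(T_i)$ of the biduality map $T_i\to T_i^{**}$. Since also $\Ext^j_\Lambda(X,\Lambda)\cong\mathsf t\bigl(\tr\,\syz^{j-1}_\Lambda X\bigr)$ for every $X\in\mod\Lambda$ and $j\ge1$, the inequality $\grade_\Lambda E_n\ge n$ is equivalent to: $E_n$ is torsion and $\tr\bigl(\syz^{i}_\Lambda E_n\bigr)$ is torsionless for $0\le i\le n-2$. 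For $n\ge2$ the hypothesis already gives that $E_n$ is torsion and that $\tr\,\syz^{i}_\Lambda E_n$ is torsionless for $i\le n-3$, so in the end only one new torsionlessness (that of $\tr\,\syz^{n-2}_\Lambda E_n$, i.e.\ $\Ext^{n-1}_\Lambda(E_n,\Lambda)=0$) must be produced.

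The case $n=1$ is the model. There $E_1=\mathsf t(\tr M)$, and both $\tr M/E_1$ and $E_1/\mathsf t(E_1)$ are torsionless, being submodules of biduals. Hence $\tr M/\mathsf t(E_1)$, an extension of these two modules, lies in $\syz^1(\mod\Lambda)$, i.e.\ is torsionless; naturality of the biduality map along $\tr M\twoheadrightarrow\tr M/\mathsf t(E_1)$ then forces $E_1=\ker(\tr M\to T_1^{**})\subseteq\mathsf t(E_1)$, so $E_1=\mathsf t(E_1)$ is torsion and $\grade_\Lambda E_1\ge1$.

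For general $n$ the idea is to carry out the same scheme $n-1$ levels down the dualized resolution. The lower hypotheses $\grade_\Lambda E_i\ge i-1$ ($i<n$), read through the two displayed families of exact sequences and the horseshoe lemma, should allow one to ``straighten'' the complex $P_\bullet^{*}$: one builds a module $W'$ with $\mathsf t(W')\cong E_n$ such that $\tr M$ is, up to projective summands, $\syz^{n-1}_\Lambda W'$, each of the $n-1$ straightening steps introducing only an error term which is torsion, or torsionless, of exactly the grade guaranteed by the hypotheses, hence absorbable. Then — and this is the single use of the hypothesis on $\syz^n$ — one shows that $W'$ modulo its iterated torsion is the middle term of a short exact sequence whose outer terms lie in $\syz^n(\mod\Lambda)$: one outer term is a genuine $n$th syzygy $\syz^n_\Lambda(\,\cdot\,)$ coming from the tail of the resolution, and the other is $n$-torsionfree — the vanishing of the relevant $\Ext^i_{\Lambda^\op}(\tr(-),\Lambda)$ for $i\le n$ being precisely the torsionlessness furnished by the lower grade hypotheses — hence an $n$th syzygy by the Auslander--Bridger theorem that $n$-torsionfree modules are $n$th syzygies. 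Extension closedness of $\syz^n(\mod\Lambda)$ then places $W'$ modulo torsion into $\syz^n(\mod\Lambda)$, and the biduality-naturality argument of the case $n=1$, applied now along the syzygy filtration of $W'$, yields $\Ext^j_\Lambda(E_n,\Lambda)=0$ for $j\le n-1$.

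The main obstacle is the middle step: making the ``straightening'' precise and verifying that the successive error terms really are absorbed. This calls for a careful simultaneous induction on $i$ and on the homological degree $j$, juggling the two exact-sequence families, together with honest bookkeeping of the projective summands introduced by repeated horseshoe arguments; at several points one must check that a connecting homomorphism in the relevant $\Hom_\Lambda(-,\Lambda)$-sequence is surjective or injective, not merely that a flanking term vanishes. By contrast, once the straightening is available, the single appeal to extension closedness of $\syz^n(\mod\Lambda)$ and the concluding naturality argument are comparatively routine, and this is why the hypothesis is needed only at the top level $n$ while everything below is supplied by the grade conditions on $E_1,\dots,E_{n-1}$.
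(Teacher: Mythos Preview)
Your plan has a genuine gap precisely where you flag it: the ``straightening'' is asserted but not carried out, and without it there is no candidate short exact sequence on which to invoke extension closedness. You do not name the module $W'$, nor the two outer terms of the alleged extension, so the claims that one is ``a genuine $n$th syzygy coming from the tail of the resolution'' and the other is ``$n$-torsionfree by the lower grade hypotheses'' cannot be checked. The concluding step is equally unspecified: in the case $n=1$ the biduality-naturality argument is a one-line inclusion $E_1\subseteq\mathsf t(E_1)$, but for general $n$ you would need to extract $\Ext^{n-1}_\Lambda(E_n,\Lambda)=0$ from the mere membership of some module in $\syz^n(\mod\Lambda)$, and you give no mechanism for doing so ``along the syzygy filtration of $W'$''.

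The paper's argument is organized quite differently and bypasses any inductive straightening. Working in $\lmod\Lambda$, it exploits the adjoint pair $(\tr\syz^n\tr,\syz^n)$: with $X=\tr M$, the unit $uX:X\to\syz^n\tr\syz^n\tr X$ is automatically a left $\syz^n$-approximation, and the grade hypotheses (via Auslander--Bridger's Proposition~(2.26), which yields that $\syz^nM$ is $n$-torsionfree) are used solely to show that $uX$ is \emph{left minimal}. That is the decisive point: for a noether algebra and an extension-closed subcategory, the cokernel $E$ of a minimal left approximation satisfies $\Ext^1_\Lambda(E,\syz^n(\mod\Lambda))=0$, hence $\Ext^1_\Lambda(E,\Lambda)=0$, by general results of Auslander--Reiten on contravariantly finite subcategories. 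The paper then forms the mapping cone of an explicit chain map comparing the dualized resolution of $M$ with a projective resolution of $\tr\syz^nM$; this cone has cohomology $E_{i+1}$ in degree $i$, and the single vanishing $\Ext^1_\Lambda(E,\Lambda)=0$ is propagated along the cone, using the lower grade bounds only as plain $\Ext$-vanishing, into $\Ext^{n-1}_\Lambda(E_n,\Lambda)=0$. Thus extension closedness enters through the approximation/minimality machinery rather than through a hand-built three-term sequence, and no error-absorbing induction is needed.
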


This theorem enables us to achieve our main purpose stated above; applying it to commutative rings, we can prove the following theorem, which is the second main result of this paper.
This extends Theorem \ref{egai} to arbitrary commutative noetherian local rings.
\begin{tthm}\label{im2}
Let $R$ be a commutative noetherian local ring satisfying $(\s_n)$.
The following are equivalent.
\begin{enumerate}[\rm(1)]
\item
$\syz^n(\mod R)$ is extension closed.
\item
$\syz^n(\mod R)=\s_n(R)$.
\item
$R_\p$ is Gorenstein for all $\p\in\spec R$ with $\height\p<n$.
\end{enumerate}
\end{tthm}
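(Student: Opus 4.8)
The implications $(2)\Rightarrow(1)$ and $(3)\Rightarrow(2)$ are the cheap ones, so I would dispose of them first. For $(2)\Rightarrow(1)$ it suffices to recall, as the introduction already notes, that $\s_n(R)$ is an extension closed subcategory of $\mod R$ (a standard consequence of the depth inequality for a short exact sequence, read off prime by prime); hence if $\syz^n(\mod R)=\s_n(R)$ then $\syz^n(\mod R)$ is extension closed. The implication $(3)\Rightarrow(2)$ is nothing but the Evans--Griffith implication $(2)\Rightarrow(1)$ of Theorem \ref{egai}, which I would simply invoke. So the whole content is $(1)\Rightarrow(3)$, and Theorem \ref{im1} is the engine for it.

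For $(1)\Rightarrow(3)$ the plan is as follows. Fix $\p\in\spec R$ with $\height\p<n$; the goal is that $R_\p$ be Gorenstein. Since $R$ satisfies $(\s_n)$ and $\dim R_\p=\height\p<n$, one has $\depth R_\p\ge\min\{n,\dim R_\p\}=\dim R_\p$, so $R_\p$ is Cohen--Macaulay with $\depth R_\p=\height\p<n$. By Bass's theorem $R_\p$ is Gorenstein once its injective dimension over itself is finite; and by the standard fact that, over a Noetherian local ring, a finitely generated module has injective dimension at most $m$ as soon as its $(m+1)$st $\Ext$ against the residue field vanishes, it is enough to prove $\Ext_{R_\p}^n(k(\p),R_\p)=0$. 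Now $\Ext_{R_\p}^n(k(\p),R_\p)=\Ext_R^n(R/\p,R)_\p$, its support lies in $\mathsf{V}(\p)$, and $\grade_R N=\min\{\depth R_\q\mid\q\in\mathrm{Supp}_R N\}$; since $\depth R_\p=\height\p<n$, the desired vanishing follows once I show $\grade_R\Ext_R^n(R/\p,R)\ge n$. This is exactly what Theorem \ref{im1} is built to give: with $\Lambda=R$ and $M=R/\p$, the inequality holds provided
\[
\grade_R\Ext_R^i(R/\p,R)\ge i-1\qquad(1\le i\le n).
\]

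I would prove these bounds---in fact $\grade_R\Ext_R^i(M,R)\ge i-1$ for every finitely generated $M$ and every $1\le i\le n$---by induction on $n$. The case $i=1$ is vacuous. For $2\le i\le n$, take $\q\in\mathrm{Supp}_R\Ext_R^i(M,R)$; if $\depth R_\q\le i-2$, then, as before, $(\s_n)$ forces $R_\q$ to be Cohen--Macaulay of dimension $\le i-2\le n-2$, hence $\height\q<n-1$, and applying the implication $(1)\Rightarrow(3)$ at the parameter $n-1$ (to the ring $R$, which still satisfies $(\s_{n-1})$) shows $R_\q$ is Gorenstein; but then $\Ext_{R_\q}^i(M_\q,R_\q)=0$ because $i>\dim R_\q$, contradicting $\q\in\mathrm{Supp}_R\Ext_R^i(M,R)$. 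This step presupposes that $\syz^{n-1}(\mod R)$ is again extension closed, so that $(1)\Rightarrow(3)$ is available at level $n-1$; granting that, the bounds $\grade_R\Ext_R^i(R/\p,R)\ge i-1$ are in hand, Theorem \ref{im1} delivers $\grade_R\Ext_R^n(R/\p,R)\ge n$, and $R_\p$ is Gorenstein, completing $(1)\Rightarrow(3)$.

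The step I expect to be the real obstacle is exactly the one flagged above: feeding the induction requires that ``$\syz^n(\mod R)$ extension closed'' already force ``$\syz^{n-1}(\mod R)$ extension closed'' (equivalently, by Theorem \ref{ar}, the full grade condition through level $n-1$). This is not formal---in Theorem \ref{ar} the hypothesis is extension closedness of \emph{all} syzygies up to level $n$, whereas here only the top one is given---and I expect to have to go through the Auslander--Bridger picture: $\syz^m(\mod R)$ is the category of $m$-torsionfree modules, i.e.\ those $N$ with $\Ext_R^i(\tr N,R)=0$ for $1\le i\le m$, and applying $\Hom_R(-,R)$ to a short exact sequence of $(n-1)$-torsionfree modules and to its Auslander transpose leaves correction terms $\Ext_R^j(\tr N,R)$ that must be annihilated. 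It is precisely here that Theorem \ref{im1}---which upgrades ``$\grade\ge i-1$'' to ``$\grade\ge n$'' using only extension closedness of $\syz^n(\mod R)$---does the work that condition $(1)$ of Theorem \ref{ar} would otherwise require.
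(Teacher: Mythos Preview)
Your reductions $(2)\Rightarrow(1)$ and $(3)\Rightarrow(2)$ are fine, and your first move in $(1)\Rightarrow(3)$ --- applying Theorem~\ref{im1} to $M=R/\p$ to force $\grade_R\Ext_R^n(R/\p,R)\ge n$ and hence $\Ext_{R_\p}^n(\kappa(\p),R_\p)=0$ --- is exactly right. But you have correctly put your finger on the real gap, and you do not close it. Your induction needs ``$\syz^n(\mod R)$ extension closed $\Rightarrow$ $\syz^{n-1}(\mod R)$ extension closed'', and nothing in the Auslander--Bridger/Auslander--Reiten toolkit gives this for free: the identification of $\syz^m(\mod R)$ with the $m$-torsionfree modules is \emph{equivalent} to the grade condition in Theorem~\ref{ar}, so invoking it here is circular, and Theorem~\ref{im1} only upgrades $\grade\ge i-1$ to $\grade\ge n$ once those lower bounds are already available. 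Without the induction hypothesis, your bound $\grade_R\Ext_R^i(R/\p,R)\ge i-1$ is only immediate when $\height\p=n-1$ (via $\grade\p=n-1\ge i-1$), not for smaller $\p$.

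The paper takes a different route around this obstacle. It uses your argument only for $\height\p=n-1$, obtaining (Corollary~\ref{mcor}) that $R_\p$ is Gorenstein for primes of height \emph{exactly} $n-1$. The descent to smaller heights is then a separate theorem (Theorem~\ref{rigid}): for a \emph{local} ring satisfying $(\s_n)$ with $n>t>0$, local Gorensteinness in height $=t$ forces local Gorensteinness in height $\le t$. Its proof is not homological but goes through a minimal-counterexample reduction and, in the base case $t=1$, a primary decomposition argument combined with Hartshorne's connectedness theorem for the punctured spectrum. Locality of $R$ is essential here (the paper gives a product-ring example showing the descent fails otherwise), which explains why your purely homological induction cannot succeed without an extra ingredient of this kind.
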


The organization of this paper is as follows.
The next Section 2 is devoted to introducing several notions and basic properties that are necessary in the later sections.
In Section 3, we consider extension closedness of syzygies.
We prove Theorem \ref{im1}, and apply it to commutative rings to get a sufficient condition for local Gorensteinness in height $n-1$.
In the final Section 4, we study local Gorensteinness of commutative rings.
For each integer $t\ge0$ we show the equivalence of local Gorensteinness in height equal to $t$ and local Gorensteinness in height at most $t$.
Combining this with a result obtained in Section 3, we finally give a proof of Theorem \ref{im2}.

%%%%%%%%%%%%%%%%%%%%%%%%%%%%%%%%%%%%%%%%%%%%%%%%%%%%%%%%%%%%%%%
\section{Preliminaries}

We start by our convention.

\begin{conv}
Throughout the rest of this paper, let $\Lambda$ be a two-sided noetherian ring, and let $R$ be a commutative noetherian ring.
We assume that all modules are finitely generated right ones, and that all subcategories are full ones.
\end{conv}

Denote by $\mod\Lambda$ the category of (finitely generated right) $\Lambda$-modules, by $\proj\Lambda$ the subcategory of projective modules.
Define the functor $(-)^*:\mod\Lambda\to\mod\Lambda^\op$ by $(-)^*=\Hom_\Lambda(-,\Lambda)$.
A subcategory $\X$ of $\mod\Lambda$ is said to be {\em extension closed} provided that for each exact sequence $0\to L\to M\to N\to0$ in $\mod\Lambda$, if $L$ and $N$ are in $\X$, then so is $M$.

We recall the definitions of minimal morphisms and approximations.

\begin{dfn}
Let $\C$ be a category, and let $\X$ be a subcategory of $\C$.
Let $\phi:C\to X$ be a morphism in $\C$ with $X\in\X$.
We say that:
\\
(1) $\phi$ is {\em left minimal} if all endomorphisms $f:X\to X$ with $\phi=f\phi$ are automorphisms.\\
(2) $\phi$ is a {\em left $\X$-approximation} if all morphisms from $C$ to objects in $\X$ factor through $\phi$.\\
A {\em right minimal} morphism and a {\em right $\X$-approximation} are defined dually.
\end{dfn}

A left (respectively, right) $\X$-approximation is sometimes called an {\em $\X$-preenvelope} (respectively, {\em $\X$-precover}), and a left (respectively, right) minimal one an {\em $\X$-envelope} (respectively, {\em $\X$-cover}).
A homomorphism $\phi:M\to P$ of $\Lambda$-modules with $P$ projective is a left $\proj\Lambda$-approximation if and only if the $\Lambda$-dual map $\phi^*:P^*\to M^*$ is surjective.
For the details of minimal morphisms and approximations, see \cite[\S1]{AR3} for instance.

Next we recall the definitions of an adjoint pair, a unit and a counit.

\begin{dfn}
Let $\L:\C\to\D$ and $\R:\D\to\C$ be functors between categories $\C$ and $\D$.
Suppose that for all $X\in\C$ and $Y\in\D$ there is a functorial isomorphism
$$
\Phi_{XY}:\Hom_\D(\L X,Y)\xrightarrow{\cong}\Hom_\C(X,\R Y).
$$
Then we say that $(\L,\R):\C\to\D$ (or more precisely, $(\L,\R,\Phi):\C\to\D$) is an {\em adjoint pair}.
Taking $\Phi_{X,\L X}(1_{\L X})$ and $\Phi_{\R Y,Y}^{-1}(1_{\R Y})$, one obtains natural transformations
$$
u:\one_\C\to\R\L,\qquad c:\L\R\to\one_\D,
$$
which are called the {\em unit} and {\em counit} of the adjunction, respectively.
\end{dfn}

For each $X\in\C$, every morphism $X\to\R Y$ with $Y\in\D$ uniquely factors through $uX:X\to\R\L X$.
Dually, for each $Y\in\D$, every morphism $\L X\to Y$ with $X\in\C$ uniquely factors through $cY:\L\R Y\to Y$.
In paricular, $uX$ and $cY$ are a left $\Im(\R)$-approximation and a right $\Im(\L)$-approximation, respectively.
(Here, for a functor $F:\X\to\Y$ we denote by $\Im(F)$ the {\em essential image} of $F$, namely, the subcategory of $\Y$ consisting of objects $N$ such that $N\cong FM$ for some $M\in\X$.)
Also, the equalities $\Phi_{XY}(f)=\R f\cdot uX$ and $\Phi_{XY}^{-1}(g)=cY\cdot\L g$ hold for all morphisms $f:\L X\to Y$ and $g:X\to\R Y$.
Furthermore, the compositions of natural transformations $\L\xrightarrow{\L u}\L\R\L\xrightarrow{c\L}\L$ and $\R\xrightarrow{u\R}\R\L\R\xrightarrow{\R c}\R$ are both identities.
The details can be found in \cite[Theorem IV.1.1]{M}.

Let us recall the definitions of syzygies, transposes and stable categories.

\begin{dfn}
(1) Let $M$ be a $\Lambda$-module.
Let $\cdots\to P_n\xrightarrow{\partial_n}P_{n-1}\to\cdots\to P_1\xrightarrow{\partial_1}P_0\to M\to0$ be a projective resolution of $M$.

(a) The {\em $n$th syzygy} $\syz^nM$ of $M$ is defined as the image of the map $\partial_n:P_n\to P_{n-1}$.

(b) The {\em transpose} $\tr M$ of $M$ is defined to be the cokernel of the map $\partial_1^*:P_0^*\to P_1^*$.\\
(2) For $\Lambda$-modules $M$ and $N$, let $\lhom_\Lambda(M,N)$ be the quotient of $\Hom_\Lambda(M,N)$ by the $\Lambda$-homomorphisms $M\to N$ factoring through some projective $\Lambda$-modules.
The residue class in $\lhom_\Lambda(M,N)$ of an element $f\in\Hom_\Lambda(M,N)$ is denoted by $\underline f$.\\
(3) We denote by $\lmod\Lambda$ the {\em stable category} of $\mod\Lambda$.
The objects of $\lmod\Lambda$ are precisely the (finitely generated right) $\Lambda$-modules, and the hom-set from a $\Lambda$-module $M$ to a $\Lambda$-module $N$ is given by $\lhom_\Lambda(M,N)$.
\end{dfn}

The modules $\syz M$ and $\tr M$ are uniquely determined by $M$ up to projective summands.
The assignments $M\mapsto\syz M$ and $M\mapsto\tr M$ give rise to additive functors
$$
\syz:\lmod\Lambda\to\lmod\Lambda,\qquad\tr:\lmod\Lambda\to\lmod\Lambda^\op.
$$
Moreover, $\tr$ is a duality, i.e., one has $\tr\tr M\cong M$ in $\lmod\Lambda$ for each $M\in\lmod\Lambda$.
We define the functor $\d_n:\lmod\Lambda\to\lmod\Lambda^\op$ by $\d_n=\syz^n\tr$; one has $\d_2M\cong M^*$ in $\lmod\Lambda^\op$ for each $M\in\lmod\Lambda$.
We denote by $\syz^n(\mod\Lambda)$ the subcategory of $\mod\Lambda$ consisting of $n$th syzygies $X$, that is, modules $X$ admitting an exact sequence $0 \to X \to P_{n-1} \to \cdots \to P_1 \to P_0$ of $\Lambda$-modules with each $P_i$ projective.
Also, $\syz^n(\lmod\Lambda)$ denotes the essential image of the functor $\syz^n:\lmod\Lambda\to\lmod\Lambda$, which coincides with the essential image of $\syz^n(\mod\Lambda)$ by the canonical functor $\mod\Lambda\to\lmod\Lambda$.
We refer the reader to \cite[\S2]{AB} for the details of syzygies, transposes and stable categories.

Finally, we recall the definitions of grade, depth and Serre's condition.

\begin{dfn}
(1) The {\em grade} of a $\Lambda$-module $M$, denoted by $\grade_\Lambda M$, is defined to be the infimum of nonnegative integers $i$ such that $\Ext_\Lambda^i(M,\Lambda)\ne0$.\\
(2) The {\em grade} of an ideal $I$ of $R$ is defined by $\grade I=\grade_R(R/I)$.\\
(3) When $R$ is a local ring with maximal ideal $\m$, the {\em depth} of an $R$-module $M$, denoted by $\depth_RM$, is defined as the infimum of nonnegative integers $i$ with $\Ext_R^i(R/\m,M)\ne0$.\\
(4) Let $n$ be a nonnegative integer.
An $R$-module $M$ is said to satisfy {\em Serre's condition $(\s_n)$} if the inequality $\depth_{R_\p}M_\p\ge\inf\{n,\height\p\}$ holds for all prime ideals $\p$ of $R$.
\end{dfn}

The maximal regular sequences on $R$ in $I$ (respectively, on $M$ in $\m$) have the same length, and this common length is equal to $\grade I$ (respectively, $\depth_RM$).
%In particular, if $I,J$ be ideals with $I\subseteq J$, then one has $\grade I\le\grade J$.
If $R$ satisfies $(\s_n)$, then $\height\p=\grade\p$ for all prime ideals $\p$ with $\height\p\le n$.
We denote by $\s_n(R)$ the subcategory of $\mod R$ consisting of modules satisfying $(\s_n)$.
This is an extension closed subcategory.
See \cite[\S1 and \S2]{BH} for the details of grade, depth and Serre's condition.

%%%%%%%%%%%%%%%%%%%%%%%%%%%%%%%%%%%%%%%%%%%%%%%%%%%%%%%%%%%%%%%%
\section{Extension closedness of syzygies}

Let $X,Y$ be $\Lambda$-modules.
Let $\cdots \to P_1 \to P_0 \to \tr X\to 0$ and $\cdots\to Q_1\to Q_0\to Y\to0$ be projective resolutions.
Let $f:\tr\syz^n\tr X\to Y$ be a homomorphism of $\Lambda$-modules.
We extend $f$ to a chain map of complexes as in the left below, and make a commutative diagram with exact rows as in the right below.
$$
\xymatrix@R-1pc@C-1pc{
P_0^*\ar[r]\ar[d]^{f_{n+1}} & P_1^*\ar[r]\ar[d]^{f_n} & \cdots\ar[r] & P_{n+1}^*\ar[r]\ar[d]^{f_0} & \tr\syz^n\tr X\ar[r]\ar[d]^f & 0 \\
Q_{n+1}\ar[r] & Q_n\ar[r] & \cdots\ar[r] & Q_0\ar[r] & Y\ar[r] & 0
}
\qquad
\xymatrix@R-1pc@C-1pc{
P_0^*\ar[r]\ar[d]^{f_{n+1}} & P_1^*\ar[r]\ar[d]^{f_n} & X\ar[r]\ar[d]^{\phi(f)} & 0\\
Q_{n+1}\ar[r] & Q_n\ar[r] & \syz^nY\ar[r] & 0
}
$$
Thus we get a homomorphism $\phi(f):X\to\syz^nY$ of $\Lambda$-modules.
Conversely, let $g:X\to\syz^nY$ be a homomorphism of $\Lambda$-modules.
First, extend $g$ to a commutative diagram with exact rows as in the left below, whose rows are finite projective presentations.
Second, extend $h_0:=g_1^*$ and $h_1:=g_0^*$ to a chain map as in the middle below.
Third, make a commutative diagram with exact rows as in the right below.
$$
\xymatrix@R-1pc@C-1pc{
P_0^*\ar[r]\ar[d]^{g_1} & P_1^*\ar[r]\ar[d]^{g_0} & X\ar[r]\ar[d]^g & 0\\
Q_{n+1}\ar[r] & Q_n\ar[r] & \syz^nY\ar[r] & 0
}
\xymatrix@R-1pc@C-1pc{
Q_0^*\ar[r]\ar[d]^{h_{n+1}} & Q_1^*\ar[r]\ar[d]^{h_n} & \cdots\ar[r] & Q_n^*\ar[r]\ar[d]^{h_1} & Q_{n+1}^*\ar[d]^{h_0}\\
P_{n+1}\ar[r] & P_n\ar[r] & \cdots\ar[r] & P_1\ar[r] & P_0
}
\xymatrix@R-1pc@C-1pc{
P_n^*\ar[r]\ar[d]^{h_n^*} & P_{n+1}^*\ar[r]\ar[d]^{h_{n+1}^*} & \tr\syz^n\tr X\ar[r]\ar[d]^{\psi(g)} & 0\\
Q_1\ar[r] & Q_0\ar[r] & Y\ar[r] & 0
}
$$
Thus we get a homomorphism $\psi(g):\tr\syz^n\tr X\to Y$ of $\Lambda$-modules.
The following lemma holds; see \cite[Corollary 3.3]{AR}.

\begin{lem}\label{adj}
With the notation above, the assignments $\Phi:\underline{f}\mapsto\underline{\phi(f)}$ and $\Psi:\underline{g}\mapsto\underline{\psi(g)}$ make functorial isomorphisms
$$
\Phi:\lhom_\Lambda(\tr\syz^n\tr X,Y)\rightleftarrows\lhom_\Lambda(X,\syz^nY):\Psi
$$
which are mutually inverse.
In particular, one has an adjoint pair $(\tr\syz^n\tr,\syz^n):\lmod\Lambda\to\lmod\Lambda$.
(The corresponding statement for $\Lambda^\op$ also holds.)
\end{lem}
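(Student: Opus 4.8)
The statement asserts two things: that $\Phi$ and $\Psi$ are mutually inverse, functorial isomorphisms of abelian groups, and that, as a consequence, $(\tr\syz^n\tr,\syz^n):\lmod\Lambda\to\lmod\Lambda$ is an adjoint pair. The second follows from the first purely formally --- it is exactly the definition of an adjoint pair recalled in Section 2, applied to the functors $\tr\syz^n\tr$ and $\syz^n$ --- and the version over $\Lambda^\op$ results from interchanging $\Lambda$ and $\Lambda^\op$. So the real content is to produce $\Phi$ and $\Psi$ over $\Lambda$, and the plan is to verify four things in order: that $\Phi$ descends to the stable hom-groups, that $\Psi$ does too, that $\Psi\Phi=\mathrm{id}$ and $\Phi\Psi=\mathrm{id}$, and that $\Phi$ and $\Psi$ are natural in $X$ and $Y$.

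For the well-definedness of $\Phi$: the $P_i^*$ are finitely generated projective, the top row of the first displayed diagram is a complex of projectives (its consecutive compositions vanish because $P_\bullet$ is a complex and $\tr\syz^n\tr X$ is the cokernel of $P_n^*\to P_{n+1}^*$), and the bottom row is exact in the range shown; so the comparison theorem produces a chain map $(f_i)$ lifting $f$, unique up to chain homotopy, and $\phi(f)$ is the map induced on cokernels by the commutative square formed by $f_{n+1}$ and $f_n$ (here $X$ is the cokernel of $P_0^*\to P_1^*$ and $\syz^nY$ that of $Q_{n+1}\to Q_n$, as in the diagrams). Replacing $(f_i)$ by a homotopic lift alters $f_n$ by the sum of a map factoring through $Q_{n+1}\to Q_n$, which induces the zero map to $\syz^nY$, and a map whose induced morphism $X\to\syz^nY$ factors through the projective $P_2^*$; hence $\underline{\phi(f)}$ does not depend on the lift. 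Lifts may be chosen additively, so $\Phi$ is additive; and if $f$ factors through a projective $\Pi$, then lifting $\Pi\to Y$ along $Q_0\to Y$ and composing with a lift of $\tr\syz^n\tr X\to\Pi$ gives a chain map with $f_1=\dots=f_{n+1}=0$, so $\phi(f)=0$. Therefore $\Phi$ induces a homomorphism $\lhom_\Lambda(\tr\syz^n\tr X,Y)\to\lhom_\Lambda(X,\syz^nY)$. The argument for $\Psi$ is completely parallel, run through the three successive liftings in the construction of $\psi$.

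Now for the mutual-inverse property, the guiding principle is that the construction of $\psi$ is nothing but the $\Lambda$-dual of the construction of $\phi$, read through the duality $\tr\tr\cong\mathrm{id}$. Concretely: if $g=\phi(f)$ is obtained from a lift $(f_i)$, then $g$ is already presented by the square $(f_{n+1},f_n)$, so in building $\psi(g)$ one may take $g_1=f_{n+1}$ and $g_0=f_n$ at the first step, the $\Lambda$-dual chain map $(f_i^*)$ at the second step (it is a chain map since $(-)^*$ is functorial), and then, identifying each $(f_i^*)^*$ with $f_i$ via the biduality isomorphisms of the finitely generated projectives involved, one finds that $\psi(g)$ is presented by $(f_1,f_0)$ --- which is precisely $f$. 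By the stable independence proved above, $\underline{\psi(\phi(f))}=\underline f$, i.e.\ $\Psi\Phi=\mathrm{id}$; the mirror-image computation gives $\Phi\Psi=\mathrm{id}$. Naturality of $\Phi$ and $\Psi$ in $X$ and $Y$ is then routine: a morphism of modules lifts to the chosen resolutions, liftings of composites may be taken to be composites of liftings up to homotopy, and all resulting discrepancies lie among maps factoring through projectives; combined with the fact, recalled in Section 2, that $\syz$ and $\tr$ are functors on the stable categories, this yields the functorial isomorphisms, and hence the adjoint pair.

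The step needing real care is the mutual-inverse one: one must keep track of the indexing of the two complexes and of their truncations, match each of the three steps of $\psi$ with the correct piece of the chain map $(f_i)$, and handle the biduality identifications $P\cong P^{**}$ correctly, before it becomes transparent that, once all the choices are made compatibly, the two constructions are literally inverse to each other. A self-contained treatment along these lines is \cite[Corollary 3.3]{AR}.
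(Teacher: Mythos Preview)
Your proposal is correct and gives a careful sketch of the standard comparison-theorem argument. In the paper itself this lemma is not proved at all: it is simply stated with the reference ``see \cite[Corollary 3.3]{AR}'', the same source you cite at the end. So your write-up is strictly more detailed than what appears in the paper, and follows the same underlying approach as that reference.
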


A {\em noether algebra} is by definition a module-finite algebra of a commutative noetherian ring.
(Thus a noether algebra is a two-sided noetherian ring.)
Now we prove the following theorem, which is the main result of this section.

\begin{thm}[\,=\,Theorem \ref{im1}]\label{main1}
Let $\Lambda$ be a noether algebra, $M$ a module over $\Lambda^\op$, and $n$ a nonnegative integer.
Suppose that $\syz^n(\mod\Lambda)$ is an extension closed subcategory of $\mod\Lambda$.
If $\grade_\Lambda\Ext_{\Lambda^\op}^i(M,\Lambda)\ge i-1$ for all $1\le i\le n$, then $\grade_\Lambda\Ext_{\Lambda^\op}^n(M,\Lambda)\ge n$.
\end{thm}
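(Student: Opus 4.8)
The plan is to reduce the conclusion to a vanishing statement and then run Auslander--Reiten-type machinery around the adjoint pair of Lemma \ref{adj}. By the definition of grade, $\grade_\Lambda\Ext^n_{\Lambda^\op}(M,\Lambda)\ge n$ is equivalent to $\Ext^j_\Lambda(\Ext^n_{\Lambda^\op}(M,\Lambda),\Lambda)=0$ for $0\le j\le n-1$. For $n\le1$ the statement is immediate (the case $n=1$ being exactly the implication $(1)\Rightarrow(2)$ of Theorem \ref{ar}), so one may assume $n\ge2$.

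The combinatorial backbone is the ``staircase'' coming from a projective resolution $\cdots\to Q_1\to Q_0\to M\to0$ over $\Lambda^\op$: dualizing gives a complex $Q_0^*\to Q_1^*\to\cdots$ of $\Lambda$-modules with $H^i=E^i:=\Ext^i_{\Lambda^\op}(M,\Lambda)$ for $i\ge1$, and writing $B^i,Z^i$ for its coboundaries and cocycles one obtains, for each $i\ge1$, short exact sequences of $\Lambda$-modules
$$0\to E^i\to\tr(\syz^{i-1}M)\to B^{i+1}\to0,\qquad 0\to Z^i\to Q_i^*\to B^{i+1}\to0,$$
with $\syz B^{i+1}\cong Z^i$, $B^i\subseteq Z^i$ and $Z^i/B^i\cong E^i$. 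Two consequences I would single out: first, $E^n$ is precisely the kernel of the biduality map of $A:=\tr(\syz^{n-1}M)$ (since $\tr A\cong\syz^{n-1}M$ and $\Ext^1_{\Lambda^\op}(\tr A,\Lambda)\cong\ker(\sigma_A)$), and $A/E^n\cong B^{n+1}$ is torsionless; second, the $\Lambda$-module $\syz^n\tr M$, as well as the target $\syz^n\tr\syz^n M$ of the unit of the adjunction at $\tr M$, lies in $\syz^n(\mod\Lambda)$, and the staircase exhibits these modules as built from the $E^i$ with $i\le n$ and their syzygies together with a torsionless tail.

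The heart of the proof is to propagate the grade bounds $\grade_\Lambda E^i\ge i-1$ ($1\le i\le n$) up the staircase using extension closedness of $\syz^n(\mod\Lambda)$. Concretely: applying $\Hom_\Lambda(-,\Lambda)$ to the lower steps of the staircase and using $\grade_\Lambda E^i\ge i-1$ for $i<n$, the long exact sequences give the needed low-degree $\Ext_\Lambda(-,\Lambda)$-vanishing, in the appropriate ranges, for the modules sitting below the top of the staircase; feeding this, together with the fact that the target of the unit lies in the extension-closed category $\syz^n(\mod\Lambda)$, into a comparison of the left $\syz^n(\lmod\Lambda)$-approximation $\tr M\to\syz^n\tr\syz^n M$ (the unit) with the staircase, one concludes that $A=\tr(\syz^{n-1}M)$ agrees with an $n$th syzygy except through $E^n$. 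Dualizing $0\to E^n\to A\to B^{n+1}\to0$ and using what one now knows about $A$ and $B^{n+1}$ then forces $\Ext^j_\Lambda(E^n,\Lambda)=0$ for $0\le j\le n-1$, which is the assertion.

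The step I expect to be the main obstacle is precisely the invocation of extension closedness: it only tells us that the \emph{middle} term of a short exact sequence with outer terms in $\syz^n(\mod\Lambda)$ again lies in $\syz^n(\mod\Lambda)$, whereas the modules one needs to control in the staircase — $E^n$ and the adjacent $B^i,Z^i$ — naturally occur as \emph{outer} terms. Overcoming this requires carefully splicing and syzygy-shifting the staircase sequences so that the module to be controlled is squeezed between two modules already known to be $n$th syzygies; it is exactly here that the weaker hypothesis $\grade_\Lambda E^i\ge i-1$ (rather than $\ge i$) is the balance point that makes the bookkeeping at each level of the staircase go through, and it is here that the refinement over the $(1)\Rightarrow(2)$ half of Theorem \ref{ar} lies.
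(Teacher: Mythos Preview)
Your framework is correct --- the adjoint pair $(\tr\syz^n\tr,\syz^n)$, the unit as a left $\syz^n(\lmod\Lambda)$-approximation, and the staircase from dualizing a resolution of $M$ are the right objects --- and you have correctly isolated the obstacle. But your proposed way around it (splice and syzygy-shift so that the module to be controlled sits in the \emph{middle} of a sequence with $n$th-syzygy outer terms) does not work, and this is a genuine gap. Extension closedness enters through a different mechanism: for a left \emph{minimal} $\X$-approximation $\phi$ into an extension-closed subcategory $\X$ of $\mod\Lambda$ over a noether algebra, one has $\Ext^1_\Lambda(E,\X)=0$ for $E$ the cokernel of $\phi$ (this is \cite[Corollary~1.8]{AR2} combined with \cite[Lemma~4.5]{AR}). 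So the real issue is not to squeeze anything between two $n$th syzygies, but to prove that the unit $uX$ at $X=\tr M$, upgraded by a projective summand to a map in $\mod\Lambda$, is a left \emph{minimal} $\syz^n(\mod\Lambda)$-approximation.

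Minimality is exactly where the hypothesis $\grade_\Lambda\Ext^i_{\Lambda^\op}(M,\Lambda)\ge i-1$ for all $i\le n$ enters most decisively, and your sketch misses this. By \cite[Proposition~(2.26)]{AB} the hypothesis makes $\d_nX=\syz^nM$ an $n$-torsionfree $\Lambda^\op$-module, so $u(\d_nX)$ is an isomorphism in $\lmod\Lambda^\op$; a short duality computation with the unit/counit identity $\d_nuZ\cdot u\d_nZ=\underline{1}$ then forces $uX$ itself to be left minimal. Once $\Ext^1_\Lambda(E,\Lambda)=0$ is in hand, the staircase bookkeeping you describe does finish the job: a chain of injections collapses to $\Ext^{n-1}_\Lambda(\Ext^n_{\Lambda^\op}(M,\Lambda),\Lambda)=0$, which is the only vanishing still needed since the hypothesis already gives grade $\ge n-1$. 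Without the minimality argument, however, there is no visible route from extension closedness to any $\Ext$-vanishing, and the last paragraph of your proposal is essentially an acknowledgment of this.
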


\begin{proof}
The assertion is trivial for $n=0$, and follows from Theorem \ref{ar} for $n=1$.
So assume $n\ge2$.
We use the notation of the part preceding Lemma \ref{adj}.
Set $X:=\tr M$, $Y:=\tr\syz^n\tr X$, $Q_0:=P_{n+1}^*$, $Q_1:=P_n^*$ and let $f$ be the identity map of $Z:=\tr\syz^n\tr X$.
We get a chain map as in the left below and a commutative diagram as in the right below.
$$
\xymatrix@R-1pc@C-1pc{
P_0^*\ar[r]^\alpha\ar[d]^{f_{n+1}} & P_1^*\ar[r]^\beta\ar[d]^{f_n} & \cdots\ar[r] & P_{n-1}^*\ar[r]\ar[d]^{f_2} & P_n^*\ar[r]\ar@{=}[d]^{f_1} & P_{n+1}^*\ar[r]\ar@{=}[d]^{f_0} & Z\ar[r]\ar@{=}[d]^f & 0 \\
Q_{n+1}\ar[r] & Q_n\ar[r] & \cdots\ar[r] & Q_2\ar[r] & Q_1\ar[r] & Q_0\ar[r] & Z\ar[r] & 0
}
\qquad
\xymatrix@R-1pc@C-1pc{
P_0^*\ar[r]^\alpha\ar[d]^{f_{n+1}} & P_1^*\ar[r]^\pi\ar[d]^{f_n} & X\ar[r]\ar[d]^{\phi(f)} & 0\\
Q_{n+1}\ar[r] & Q_n\ar[r] & \syz^nZ\ar[r] & 0
}
$$
These diagrams induce a chain map
$$
\xymatrix@R-1pc@C-1pc{
A\ar@<-2pt>[d]^\xi:& 0\ar[r] & X\ar[r]^\eta\ar[d]^\rho & P_2^*\ar[r]\ar[d]^{f_{n-1}} & P_3^*\ar[r]\ar[d]^{f_{n-2}} & \cdots\ar[r] & P_{n-2}^*\ar[r]\ar[d]^{f_3} & P_{n-1}^*\ar[r]\ar[d]^{f_2} & W\ar[r]\ar@{=}[d] & 0 \\
B:& {\underbrace{0}_{0}}\ar[r] & {\underbrace{V}_{1}}\ar[r] & {\underbrace{Q_{n-1}}_{2}}\ar[r] & {\underbrace{Q_{n-2}}_{3}}\ar[r] & \cdots\ar[r] & {\underbrace{Q_3}_{n-2}}\ar[r] & {\underbrace{Q_2}_{n-1}}\ar[r] & {\underbrace{W}_{n}}\ar[r] & {\underbrace{0}_{n+1}}
}
$$
where we put $\rho:=\phi(f)$, $V:=\syz^nZ=\syz^n\tr\syz^n\tr X$ and $W:=\syz^2\tr\syz^n\tr X$.
Note that for each $1\le i\le n$ the $i$th homology $\H^i(A)$ of the cochain complex $A$ is isomorphic to $\Ext_{\Lambda^\op}^i(\tr X,\Lambda)$, while $B$ is an exact complex.
Take the mapping cone of the chain map $\xi$.
It is easy to see that it is quasi-isomorphic to a complex
$$
\xymatrix@R-1pc@C-1pc{
C: & {\underbrace{0}_{-1}}\ar[r] & {\underbrace{X}_0}\ar[r]^(0.4){\binom{\rho}{\eta}} & {\underbrace{V\oplus P_2^*}_1}\ar[r] & {\underbrace{Q_{n-1}\oplus P_3^*}_2}\ar[r] & \cdots\ar[r] & {\underbrace{Q_3\oplus P_{n-1}^*}_{n-2}}\ar[r] & {\underbrace{Q_2}_{n-1}}\ar[r] & {\underbrace{0}_n}
}
$$
with $\H^i(C)\cong\Ext_{\Lambda^\op}^{i+1}(\tr X,\Lambda)\cong\Ext_{\Lambda^\op}^{i+1}(M,\Lambda)$ for $0\le i\le n-1$.

By Lemma \ref{adj} we have an adjoint pair $(\L,\R):\lmod\Lambda\to\lmod\Lambda$ with $\L=\tr\syz^n\tr$ and $\R=\syz^n$.
Let $u:\one\to\R\L$ be the unit of the adjunction.
Then the morphism $\underline\rho$ is nothing but $uX$.
We establish several claims.

\begin{claim}\label{c1}
The morphism $\binom{\rho}{\eta}:X\to V\oplus P_2^*$ in $\mod\Lambda$ is a left $\syz^n(\mod\Lambda)$-approximation.
\end{claim}

\begin{proof}[Proof of Claim]
As $\underline\rho=uX$, the morphism $\underline\rho$ in $\lmod\Lambda$ is a left $\syz^n(\lmod\Lambda)$-approximation.
We have $\beta=\eta\pi$, and the complex $P_2^{**}\xrightarrow{\beta^*}P_1^{**}\xrightarrow{\alpha^*}P_0^{**}$ is exact, since it is isomorphic to the complex $P_2\to P_1\to P_0$.
It is observed from this that the map $\eta^*:P_2^{**}\to X^*$ is surjective, which implies that the morphism $\eta$ in $\mod\Lambda$ is a left $\proj\Lambda$-approximation.
The claim can now easily be shown.
\renewcommand{\qedsymbol}{$\square$}
\end{proof}

\begin{claim}\label{tf}
If $\Ext_{\Lambda^\op}^i(\tr X,\Lambda)=0$ for all $1\le i\le n$ (i.e., $X$ is {\em $n$-torsionfree}), then $uX$ is an isomorphism in $\lmod\Lambda$.
(The corresponding statement for $\Lambda^\op$ is also true.)
\end{claim}

\begin{proof}[Proof of Claim]
The assumption implies that the complex $P_0^*\to P_1^*\to\cdots\to P_{n+1}^*\to\tr\syz^n\tr X\to 0$ is exact.
Hence we can take $Q_i:=P_{n+1-i}^*$ and $f_i:=1_{P_{n+1-i}^*}$ for every $0\le i\le n+1$.
It follows that $uX$ is an isomorphism.
\renewcommand{\qedsymbol}{$\square$}
\end{proof}

\begin{claim}\label{ddd}
\text{The morphism $u\d_nX:\d_nX\to\d_n^3X$ in $\lmod\Lambda$ is an isomorphism}.
\end{claim}

\begin{proof}[Proof of Claim]
Combining the assumption of the theorem with \cite[Proposition (2.26)]{AB} yields that $\d_nX=\syz^nM$ is $n$-torsionfree.
The assertion follows from Claim \ref{tf}.
\renewcommand{\qedsymbol}{$\square$}
\end{proof}

\begin{claim}\label{duud}
The equality $\underline{1_{\d_nZ}}=\d_nuZ\cdot u\d_nZ$ holds for all $Z\in\lmod\Lambda$.
(The corresponding statement for $\Lambda^\op$ is also true.)
\end{claim}

\begin{proof}[Proof of Claim]
Let $c:\L\R\to\one$ stand for the counit of the adjunction.
The composition $\R c\cdot u\R$ of natural transformations is an identity.
It is straightforward to verify that $c$ coincides with the composition $\tr u\tr$.
The assertion is now easily deduced.
\renewcommand{\qedsymbol}{$\square$}
\end{proof}

\begin{claim}\label{c2}
The morphism $\underline\rho:X\to V$ in $\lmod\Lambda$ is left minimal.
\end{claim}

\begin{proof}[Proof of Claim]
We have $\underline\rho=uX$ and $V=\d_n^2X$.
Let us prove that $uX:X\to\d_n^2X$ is left minimal.
Let $h:\d_n^2X\to\d_n^2X$ be an endomorphism in $\lmod\Lambda$ such that $uX=h\cdot uX$.
Then $\d_nuX=\d_nuX\cdot\d_nh$.
By Claims \ref{ddd} and \ref{duud} the morphism $\d_nuX$ is an isomorphism.
Hence $\d_nh$ is an automorphism, and so is $\d_n^2h$.
Using Claim \ref{duud} again, we have $\underline{1_{\d_n^2X}}=\d_nu\d_nX\cdot u\d_n^2X$.
Applying Claim \ref{ddd} again implies that $\d_nu\d_nX$ is an isomorphism.
Hence $u\d_n^2X$ is also an isomorphism.
It follows from the equality $\d_n^2h\cdot u\d_n^2X=u\d_n^2X\cdot h$ that $h$ is an automorphism.
Therefore, the morphism $uX$ is left minimal.
\renewcommand{\qedsymbol}{$\square$}
\end{proof}
Let $E$ be the cokernel of the map $\binom{\rho}{\eta}$.
Recall our assumption that $\Lambda$ is a noether algebra and $\syz^n(\mod\Lambda)$ is extension closed.
Combining Claims \ref{c1} and \ref{c2} with \cite[Corollary 1.8]{AR2} and \cite[Lemma 4.5]{AR}, we obtain $\Ext_\Lambda^1(E,\syz^n(\mod\Lambda))=0$.
In particular, $\Ext_\Lambda^1(E,\Lambda)=0$.

Decomposing the complex $C$ into short exact sequences
\begin{equation}\label{two}
0\to B^i\xrightarrow{p^i} Z^i\to H^i\to0,\quad0\to Z^i\xrightarrow{q^i} C^i\to B^{i+1}\to0\quad(0\le i\le n-1)
\end{equation}
with $H^i=\H^i(C)\cong\Ext_{\Lambda^\op}^{i+1}(M,\Lambda)$.
The assumption of the theorem implies $\grade_\Lambda H^i\ge i$ for all $0\le i\le n-1$.

What we want to prove is that $\grade_\Lambda H^{n-1}\ge n$.
Making the pushout diagram of the maps $p^1$ and $q^1$, we get an exact sequence $0 \to H^1 \to E \to B^2 \to 0$.
When $n=2$, we have $B^2=0$ and $H^1=E$.
Hence $\Ext_\Lambda^1(H^1,\Lambda)=0$, which implies $\grade_\Lambda H^1\ge2$ and we are done.
Let $n\ge3$.
The functor $(-)^*$ gives an exact sequence $0=(H^1)^*\to\Ext_\Lambda^1(B^2,\Lambda)\to\Ext_\Lambda^1(E,\Lambda)=0$, which shows $\Ext_\Lambda^1(B^2,\Lambda)=0$.
Let $2\le i\le n-2$ be an integer.
From \eqref{two} we get an exact sequence $0=\Ext_\Lambda^{i-1}(H^i,\Lambda)\to\Ext_\Lambda^{i-1}(Z^i,\Lambda)\to\Ext_\Lambda^{i-1}(B^i,\Lambda)$ and an isomorphism $\Ext_\Lambda^{i-1}(Z^i,\Lambda)\cong\Ext_\Lambda^i(B^{i+1},\Lambda)$ since $C^i$ is a projective module.
Therefore we have an injection $\Ext_\Lambda^i(B^{i+1},\Lambda)\hookrightarrow\Ext_\Lambda^{i-1}(B^i,\Lambda)$.
Thus we obtain a chain
$$
\Ext_\Lambda^{n-2}(B^{n-1},\Lambda)\hookrightarrow\cdots\hookrightarrow\Ext^2(B^3,\Lambda)\hookrightarrow\Ext_\Lambda^1(B^2,\Lambda)
$$
of injections.
Since $\Ext_\Lambda^1(B^2,\Lambda)$ vanishes, so does $\Ext_\Lambda^{n-2}(B^{n-1},\Lambda)$.
The exact sequence $0\to B^{n-1}\to C^{n-1}\to H^{n-1}\to0$ and the projectivity of the module $C^{n-1}$ imply $\Ext_\Lambda^{n-1}(H^{n-1},\Lambda)=0$.
Now we conclude $\grade_\Lambda H^{n-1}\ge n$.
\end{proof}

\begin{rem}
Theorem \ref{main1} is regarded as a strong version of the implication (1) $\Rightarrow$ (2) in Theorem \ref{ar}.
In fact, one can deduce this implication from Theorem \ref{main1}, as follows.
We use induction on $n$; the case $n=0$ is trivial.
Let $n\ge1$, and assume that $\syz^i(\mod\Lambda)$ is extension closed for all $1\le i\le n$.
The induction hypothesis yields $\grade_\Lambda\Ext_{\Lambda^\op}^i(M,\Lambda)\ge i$ for each $1\le i\le n-1$ and each $M\in\mod\Lambda^\op$.
We have $\grade_\Lambda\Ext_{\Lambda^\op}^n(M,\Lambda)\ge n-1$: this is trivial for $n=1$, and for $n\ge2$ the isomorphism $\Ext_{\Lambda^\op}^n(M,\Lambda)\cong\Ext_{\Lambda^\op}^{n-1}(\syz M,\Lambda)$ implies it.
By virtue of Theorem \ref{main1}, we obtain $\grade_\Lambda\Ext_{\Lambda^\op}^n(M,\Lambda)\ge n$.
Thus $\grade_\Lambda\Ext_{\Lambda^\op}^i(M,\Lambda)\ge i$ for all $1\le i\le n$ and all $M\in\mod\Lambda^\op$.
\end{rem}

The following result is a consequence of Theorem \ref{main1}, which is used in the proof of Theorem \ref{main2} stated later.

\begin{cor}\label{mcor}
Let $n$ be a nonnegative integer.
Let $R$ be a commutative noetherian ring satisfying Serre's condition $(\s_{n-1})$.
If $\syz^n(\mod R)$ is extension closed, then the local ring $R_\p$ is Gorenstein for all prime ideals $\p$ of $R$ with height $n-1$.
\end{cor}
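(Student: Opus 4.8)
The plan is to apply Theorem \ref{main1} to $\Lambda := R$ — a commutative noetherian ring is module-finite over itself, hence a noether algebra, and $R^\op = R$ — and to the cyclic module $M := R/\p$, where $\p$ is a prime ideal of $R$ with $\height\p = n-1$. The case $n = 0$ is vacuous, so I assume $n \ge 1$.

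First I would record two consequences of $(\s_{n-1})$ for such a $\p$: since $\height\p \le n-1$, one has $\grade\p = \height\p = n-1$, and $\depth R_\p \ge \min\{n-1,\height\p\} = n-1 = \dim R_\p$, so that $R_\p$ is Cohen--Macaulay of dimension $d := n-1$.

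Next I would verify the hypothesis of Theorem \ref{main1} for $M = R/\p$. For each $1 \le i \le n$, the $R$-module $\Ext_R^i(R/\p,R)$ is annihilated by $\ann_R(R/\p) = \p$, so whenever it is nonzero its grade is at least $\grade\p = n-1 \ge i-1$. Hence Theorem \ref{main1} applies and yields $\grade_R\Ext_R^n(R/\p,R) \ge n$. I would then localize at $\p$: put $N := \Ext_R^n(R/\p,R)$, so that $N_\p = \Ext_{R_\p}^n(R_\p/\p R_\p,R_\p)$. If $N_\p \ne 0$, then $\p \in \operatorname{Supp}_R N = \v(\ann_R N)$, so $\ann_R N \subseteq \p$ and therefore $\grade_R N \le \grade\p = n-1$, contradicting $\grade_R N \ge n$. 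Thus $\Ext_{R_\p}^n(R_\p/\p R_\p,R_\p) = 0$.

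It remains to observe that a Cohen--Macaulay local ring $A$ of dimension $d$ with $\Ext_A^{d+1}(A/\m_A,A) = 0$ is Gorenstein: factoring out a maximal $A$-regular sequence reduces this to the case in which $A$ is Artinian and $\Ext_A^1(A/\m_A,A) = 0$, and then Matlis duality shows that the injective hull of the residue field is free over $A$, hence isomorphic to $A$ by a length count, so $A$ is self-injective. Applying this with $A = R_\p$ completes the proof. The substance of the argument lies in the choice $M = R/\p$: it makes the grade hypothesis of Theorem \ref{main1} hold automatically, since every $\Ext_R^i(R/\p,R)$ is annihilated by $\p$ and $\grade\p = n-1$. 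Consequently I do not anticipate a genuine obstacle; the only non-formal ingredient is the standard characterization of Gorensteinness among Cohen--Macaulay rings used in the last step.
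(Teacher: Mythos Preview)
Your proof is correct and follows essentially the same route as the paper: apply Theorem \ref{main1} to $M=R/\p$, using $\ann_R\Ext_R^i(R/\p,R)\supseteq\p$ and $\grade\p=n-1$ to verify the grade hypothesis, then deduce $\Ext_{R_\p}^n(\kappa(\p),R_\p)=0$ and conclude Gorensteinness. The only differences are in the finishing details: the paper passes through the vanishing of $\Ext_R^{n-1}(\Ext_R^n(R/\p,R),R)$ and a direct-summand argument to get $\Ext_{R_\p}^n(\kappa(\p),R_\p)=0$, whereas your support/annihilator contradiction is a little more direct; and the paper cites \cite{FFGR} for the final Gorensteinness step, while you supply the standard Cohen--Macaulay plus Matlis-duality argument explicitly.
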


\begin{proof}
Let $\p$ be a prime ideal of $R$ with $\height\p=n-1$.
We have
$$
\grade_R\Ext_R^i(R/\p,R)=\grade(\ann_R\Ext_R^i(R/\p,R))\ge\grade\p=n-1\ge i-1
$$
for each $1\le i\le n$.
Here, the first inequality is shown by the fact that the ideal $\ann_R\Ext_R^i(R/\p,R)$ contains $\p$, and the second equality follows from the assumption that $R$ satisfies $(\s_{n-1})$.
% and \cite[Proposition 1.2.10(a)]{BH}.
Applying Theorem \ref{main1}, we obtain $\grade_R\Ext_R^n(R/\p,R)\ge n$, and therefore $\Ext_R^{n-1}(\Ext_R^n(R/\p,R),R)=0$.
Localization at $\p$ yields $\Ext_{R_\p}^{n-1}(\Ext_{R_\p}^n(\kappa(\p),R_\p),R_\p)=0$.
Suppose that $\Ext_{R_\p}^n(\kappa(\p),R_\p)$ is nonzero.
Then it contains $\kappa(\p)$ as a direct summand, and $\Ext_{R_\p}^{n-1}(\kappa(\p),R_\p)$ is a direct summand of $\Ext_{R_\p}^{n-1}(\Ext_{R_\p}^n(\kappa(\p),R_\p),R_\p)$, which is zero.
If follows that $\Ext_{R_\p}^{n-1}(\kappa(\p),R_\p)=0$, but this contradicts the fact that $R_\p$ has depth $n-1$.
(In fact, $R_\p$ is a Cohen--Macaulay local ring of dimension $n-1$.)
Therefore $\Ext_{R_\p}^n(\kappa(\p),R_\p)=0$, which implies that $R_\p$ is Gorenstein; see \cite[Theorem (1.1)]{FFGR}.
\end{proof}

We close this section by posing a naive question.

\begin{ques}
Under the assumption of Corollary \ref{mcor}, is $R_\p$ a Gorenstein local ring for all prime ideals $\p$ with height {\em less than} (or equal to) $n-1$\,?
\end{ques}

%%%%%%%%%%%%%%%%%%%%%%%%%%%%%%%%%%%%%%%%%%%%%%%%%%%%%%%%%%%%%%%
\section{Local Gorensteinness of commutative rings}

We begin with proving the following theorem.

\begin{thm}\label{rigid}
Let $n>t>0$ be integers.
Let $R$ be a commutative noetherian local ring of dimension $d\ge t$ satisfying Serre's condition $(\s_n)$.
Then the following are equivalent.
\begin{enumerate}[\rm(1)]
\item
$R_\p$ is Gorenstein for all $\p\in\spec R$ with $\height\p=t$.
\item
$R_\p$ is Gorenstein for all $\p\in\spec R$ with $\height\p\le t$.
\end{enumerate}
\end{thm}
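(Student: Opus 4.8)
The implication $(2)\Rightarrow(1)$ is trivial, so all the content lies in $(1)\Rightarrow(2)$; assume $(1)$. Since a localization of a Gorenstein local ring is again Gorenstein, it is enough to prove that every prime $\p$ of $R$ with $\height\p\le t$ is contained in some prime $\q$ with $\height\q=t$: granting this, $R_\p=(R_\q)_{\p R_\q}$ is a localization of the ring $R_\q$, which is Gorenstein by $(1)$, so $R_\p$ is Gorenstein, which is $(2)$.

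The basic observation is that $(\s_n)$ with $n>t$ makes $R$ Cohen--Macaulay in low heights: for any prime $\q$ with $\height\q\le n$ one has $\depth R_\q\ge\inf\{n,\height\q\}=\height\q=\dim R_\q$, so $R_\q$ is Cohen--Macaulay, hence catenary and equidimensional; in particular $\height\q=\height\p+\dim(R_\q/\p R_\q)$ whenever $\p\subseteq\q$ and $\height\q\le n$. Note also that $\height\m=d\ge t$, so every prime of height $<t$ is strictly contained in $\m$ and hence is not maximal.

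Now fix $\p$ with $s:=\height\p\le t$. If $s=t$ take $\q=\p$. If $s<t$, I would build an ascending chain $\p=\q_0\subsetneq\q_1\subsetneq\cdots\subsetneq\q_{t-s}$: having produced $\q_i$ with $\height\q_i=s+i<t$ (so $\q_i$ is not maximal), choose $x\in\m\setminus\q_i$ and let $\q_{i+1}$ be a prime minimal over $\q_i+(x)$; Krull's principal ideal theorem applied in $R/\q_i$ gives $\height(\q_{i+1}/\q_i)=1$, and the height formula above then yields $\height\q_{i+1}=\height\q_i+1=s+i+1$. After $t-s$ steps we reach $\q:=\q_{t-s}$ with $\height\q=t$, as required. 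One may instead phrase this as an induction on $d=\dim R$: when $d=t$, the hypothesis $(\s_n)$ with $n>t=d$ forces $R$ itself to be Cohen--Macaulay, so $\m$ is the unique prime of height $t$ and $(1)$ simply says that $R$ is Gorenstein, whence $(2)$; when $d>t$, one localizes at a prime $\mathfrak{P}\supseteq\p$ of height lying in $[t,d)$ and applies the inductive hypothesis to $R_{\mathfrak{P}}$ (whose localizations at height-$t$ primes occur among those of $R$).

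The step I expect to be the main obstacle is the appeal to the height formula at each stage of the chain --- that is, excluding the possibility that $\q_{i+1}$, being merely minimal over $\q_i+(x)$, has height strictly larger than $\height\q_i+1$, the (mild) failure of catenarity that a general noetherian ring can exhibit. This cannot happen as long as the primes in the chain have height $\le n$, so the case $d\le n$, where $R$ is Cohen--Macaulay from the outset, is unproblematic; the genuine work is the case $d>n$, in which one has to use the full force of the hypothesis $(\s_n)$ with $n$ \emph{strictly} larger than $t$ --- together with the unmixedness of $R$ coming from $(\s_1)$ --- to guarantee that every prime produced in the construction has height at most $n$, so that it lies in the catenary (Cohen--Macaulay) locus and the chain cannot overshoot $t$.
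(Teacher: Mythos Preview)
Your proposal has a genuine gap, and it is precisely the one you flag but do not close. The chain construction needs $\height\q_{i+1}=\height\q_i+1$ at each step; you justify this via the height formula $\height\q=\height\p+\dim(R_\q/\p R_\q)$, but that formula was established only for primes $\q$ with $\height\q\le n$, so invoking it to bound $\height\q_{i+1}$ is circular. Your final paragraph asserts that $(\s_n)$ with $n>t$ and the unmixedness from $(\s_1)$ will keep every $\q_{i+1}$ in height $\le n$, but no argument is supplied, and none is apparent: a noetherian local ring satisfying $(\s_n)$ need not be catenary, and nothing you have written excludes the possibility that a prime covering $\q_i$ jumps to height $>n$. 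The one place your depth bound actually bites is the first step from a minimal prime (where $\p_0\in\ass R$ gives $\depth R_{\q_1}\le\dim R_{\q_1}/\p_0 R_{\q_1}=1$, forcing $\height\q_1=1$); beyond that the induction does not get off the ground. The alternative ``induction on $d$'' you sketch has the same defect: it presupposes the existence of a prime $\mathfrak{P}\supseteq\p$ with $\height\mathfrak{P}\in[t,d)$, which is exactly the point at issue.

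The paper does not try to build such a chain. It argues by contradiction on a counterexample of minimal dimension $d$, and first reduces to $t=1$ by modding out a nonzerodivisor $x$ lying in the offending height-$(t-1)$ prime (this drops $n$, $t$, and $d$ each by one). In the crucial case $t=1$ it splits the primary decomposition of $(0)$ into a ``non-Gorenstein'' part $I$ and a ``Gorenstein'' part $J$, shows $I+J$ is $\m$-primary (else localizing at a nonmaximal prime containing both would contradict minimality of $d$), and then invokes Hartshorne's connectedness theorem to conclude $\depth R\le1$. Together with $(\s_1)$ and $d\ge1$ this forces $R$ to be one-dimensional Cohen--Macaulay, hence Gorenstein by hypothesis (1), contradicting the existence of the bad minimal prime. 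That connectedness argument is doing the work your height-formula step was meant to do.
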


\begin{proof}
It suffices to prove that (1) implies (2).
%First of all, let us consider the case $d\le n$.
%In this case $R$ is a Cohen--Macaulay local ring, and it is easy to see that every prime ideal $\p$ with $\height\p\le t$ is contained in some prime ideal $\q$ with $\height\q=t$, which shows the assertion.
%Thus we may assume $d>n$.
It is enough to show that $R_\p$ is Gorenstein for all $\p\in\spec R$ with $\height\p=t-1$.
Suppose that this statement is not true, and consider the counterexample where $d=\dim R$ is minimal.
There exists a prime ideal $\q$ with height $t-1$ such that $R_\q$ is not Gorenstein.
If $t>1$, then $\grade\q=t-1>0$ and there is an $R$-regular element $x$ in $\q$.
We have $n-1>t-1>0$ and $d-1\ge t-1$.
% and $d-1\ge n-1\ge2$.
The residue ring $R/xR$ is a $(d-1)$-dimensional local ring that satisfies $(\s_{n-1})$ and is locally Gorenstein in height $t-1$.
The prime ideal $\q/xR$ of $R/xR$ has height $(t-1)-1$, and $(R/xR)_{\q/xR}$ is non-Gorenstein.
This contradicts the minimality of $d$, and we must have $t=1$.

Let $0=\bigcap_{\p\in\ass R}L(\p)$ be a primary decomposition of the zero ideal $0$ of $R$.
Let $A$ be the set of associated primes $\p$ such that $R_\p$ is not Gorenstein, and set $B=\ass R\setminus A$.
As $\q$ has height $t-1=0$, it belongs to $A$.
Take a prime ideal $\r$ of height $t$.
The assumption (1) shows that $R_\r$ is Gorenstein.
Choosing a minimal prime $\ss$ contained in $\r$, we see that $\ss$ belongs to $B$.
Thus both $A$ and $B$ are nonempty.
Put $I=\bigcap_{\p\in A}L(\p)$ and $J=\bigcap_{\p\in B}L(\p)$.

We claim that the ideal $I+J$ is $\m$-primary, where $\m$ stands for the maximal ideal of $R$.
Indeed, assume that there exists a nonmaximal prime ideal $P$ containing $I+J$.
Then $P$ contains some prime ideals $P_1\in A$ and $P_2\in B$.
Set $e:=\dim R_P$.
We have $e<d$, and the fact that $P_1\ne P_2$ implies $e\ge1=t$.
%The assumption (1) gives $e=\height P\ne t=1$, and 
%Hence $e>1=t$.
The local ring $R_P$ satisfies $(\s_n)$ and is locally Gorenstein in height $t$.
Also, $(R_P)_{P_1R_P}=R_{P_1}$ is non-Gorenstein, and $\height P_1R_P=\height P_1=0=t-1$, since $R$ satisfies $(\s_1)$ and the equality $\ass R=\Min R$ holds.
%The minimality of $d$ shows $e<n$.
%Therefore $R_P$ is a $e$-dimensional local ring satisfying $(\s_e)$ with $e\ge2$ and $e>t$.
We thus get a contradiction to the minimality of $d$, and the claim follows.

Let $X=\spec R\setminus\{\m\}$ be the punctured spectrum, and let $V_1=\v(I)\cap X$ and $V_2=\v(J)\cap X$ be closed subsets of $X$.
For each $i=1,2$ the set $V_i$ is nonempty since it contains $P_i$, while $V_1\cap V_2$ is empty.
Thus $X$ is disconnected, and Hartshorne's connectedness theorem \cite[Proposition 2.1]{H} implies that $R$ has depth at most $1$.
Since $R$ satisfies $(\s_1)$ and $d\ge t>0$, it is a Cohen--Macaulay local ring of dimension $1$.
Our assumption (1) implies that $R=R_\m$ is Gorenstein, and so is $R_\q$, which is a contradiction.
\end{proof}

As is seen in the following example, the conclusion of Theorem \ref{rigid} does not necessarily hold if one removes the assumption that $R$ is local.

\begin{ex}
Let $A$ and $B$ be commutative noetherian local rings with $\dim A\ge1$ and $\dim B=0$.
Assume that $A$ is locally Gorenstein in height one and that $B$ is non-Gorenstein.
Let $R=A\times B$ be a product ring.
Then $R$ is locally Gorenstein in height one, but not so in height zero.

Indeed, let $\p$ be a prime ideal of $R$ with height one.
Then $\p=P\times B$ for some prime ideal $P$ of $A$ with height one.
Hence $R_\p=A_P$ is Gorenstein.
Set $\q=A\times Q$, where $Q$ is the maximal ideal of $B$.
Then $\q$ is a minimal prime of $R$, and $R_\q=B$ is not Gorenstein.
\end{ex}

The next example says that the assertion of Theorem \ref{rigid} is not necessarily true if $n=t$.

\begin{ex}
Let $S$ be a regular local ring of dimension $3$ with regular system of parameters $x,y,z$.
Then $R=S/(x)\cap(y,z)^2$ is a local ring of dimension $2$ satisfying $(\s_1)$.
The ring $R$ is locally Gorenstein in height $1$, but not so in height $0$.

In fact, set $\p=xS$ and $\q=(y,z)S$.
We have $\ass R=\Min R=\{\p R,\q R\}\supsetneq\{\p R\}=\assh R$.
Hence $R$ has dimension $2$, depth $1$ and satisfies $(\s_1)$.
Let $P$ be a prime ideal of $R$ with height $1$.
Write $P=Q/\p\cap\q^2$ for some prime ideal $Q$ of $S$ containing $\p\cap\q^2$.
If $Q$ contains $\q$, then $(y,z)S\subsetneq Q\subsetneq(x,y,z)S$, which gives a contradiction.
Thus $Q$ does not contain $\q$ but contains $\p$, and the ring $R_P=S_Q/xS_Q$ is regular, whence Gorenstein.
On the other hand, $R_{\q R}=S_\q/\q^2S_\q$ is an artinian local ring of type $2$, whence non-Gorenstein.
\end{ex}

Now we prove the following theorem, which is the main result of this section.
We should remark that this theorem extends Theorem \ref{egai} on Cohen--Macaulay local rings with canonical module to arbitrary commutative noetherian local rings.
Compare with Corollary \ref{mcor} the implication (1) $\Rightarrow$ (3) in this theorem.

\begin{thm}[\,=\,Theorem \ref{im2}]\label{main2}
Let $n$ be a nonnegative integer.
Let $R$ be a commutative noetherian local ring satisfying Serre's condition $(\s_n)$.
The following are equivalent.
\begin{enumerate}[\rm(1)]
\item
The subcategory $\syz^n(\mod R)$ of $\mod R$ is extension closed.
\item
The equality $\syz^n(\mod R)=\s_n(R)$ holds.
\item
The local ring $R_\p$ is Gorenstein for all prime ideals $\p$ of $R$ with height less than $n$.
\end{enumerate}
\end{thm}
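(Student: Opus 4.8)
The plan is to prove the cycle $(3)\Rightarrow(2)\Rightarrow(1)\Rightarrow(3)$.

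Both of the first two implications are formal. For $(3)\Rightarrow(2)$: $R$ satisfies $(\s_n)$ by hypothesis, and condition $(3)$ is exactly condition $(2)$ of Theorem \ref{egai}, so the Evans--Griffith part of that theorem yields $\syz^n(\mod R)=\s_n(R)$. For $(2)\Rightarrow(1)$: the subcategory $\s_n(R)$ of $\mod R$ is extension closed, hence so is anything equal to it.

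The substance is $(1)\Rightarrow(3)$, and here the plan is to combine the two tools produced in Sections 3 and 4. First I would dispose of small $n$: the case $n=0$ is vacuous, and $n=1$ is precisely Corollary \ref{mcor} (Gorensteinness at height $0$). So assume $n\ge2$. Since $(\s_n)$ implies $(\s_{n-1})$ and $\syz^n(\mod R)$ is extension closed, Corollary \ref{mcor} gives that $R_\p$ is Gorenstein for every prime $\p$ with $\height\p=n-1$. If $\dim R\ge n-1$, then Theorem \ref{rigid}, applied with $t=n-1$ (legitimate because $n>n-1>0$ and $\dim R\ge t$), upgrades this to: $R_\p$ is Gorenstein whenever $\height\p\le n-1$, i.e.\ whenever $\height\p<n$; this is $(3)$.

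It remains to treat $\dim R<n-1$. In this regime $(\s_n)$ forces $R$ to be Cohen--Macaulay, every prime has height $<n$, and $(3)$ collapses to the single statement that $R$ is Gorenstein. The idea is to reduce to the case already settled by inflating the dimension: replace $R$ by $R'=R[[X_1,\dots,X_m]]$ with $m=(n-1)-\dim R$. Then $R'$ is Cohen--Macaulay local of dimension $n-1$ (hence satisfies $(\s_n)$), the map $R\to R'$ is flat local with regular closed fibre so that $R'$ is Gorenstein if and only if $R$ is, and it then suffices to know that $\syz^n(\mod R')$ is again extension closed and to run the case $\dim R'\ge n-1$ to conclude that $R'=R'_{\m'}$, and therefore $R$, is Gorenstein. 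I expect this last reduction to be the main obstacle: transporting ``$\syz^n$ is extension closed'' along $R\to R[[X_1,\dots,X_m]]$ is not formal, since Theorem \ref{ar} characterises extension closedness of $\syz^1,\dots,\syz^n$ simultaneously rather than of $\syz^n$ in isolation. The route I would follow is to check that the grade inequalities appearing in Theorem \ref{main1} are preserved under the base change, using that $X_1,\dots,X_m$ is an $R'$-regular sequence and that $\Ext$ commutes with the flat map $R\to R'$; alternatively one could try to bypass the base change entirely by showing directly that over a Cohen--Macaulay local ring, extension closedness of $\syz^n(\mod R)$ forces that of $\syz^{\dim R+1}(\mod R)$, and then invoke Corollary \ref{mcor} with $n$ replaced by $\dim R+1$. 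Once either of these is in place, $(1)\Rightarrow(3)$, and hence the theorem, is complete.
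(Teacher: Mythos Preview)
Your cycle $(3)\Rightarrow(2)\Rightarrow(1)\Rightarrow(3)$ and the tools you invoke are exactly those of the paper: Evans--Griffith for $(3)\Rightarrow(2)$, extension closedness of $\s_n(R)$ for $(2)\Rightarrow(1)$, and for $(1)\Rightarrow(3)$ Corollary~\ref{mcor} (giving Gorensteinness in height $n-1$) followed by Theorem~\ref{rigid} with $t=n-1$. The paper's proof is precisely that, in three sentences; it does not split off a low-dimensional case.

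Where you diverge is in singling out $\dim R<n-1$. You are right that Theorem~\ref{rigid} is stated with the hypothesis $d\ge t$, so applying it with $t=n-1$ formally requires $\dim R\ge n-1$; the paper simply applies the theorem and does not comment on this. Your proposed workarounds, however, are both incomplete: transporting extension closedness of a \emph{single} $\syz^n$ along $R\to R[[X_1,\dots,X_m]]$, or descending from $\syz^n$ to $\syz^{\dim R+1}$, is not delivered by any of the tools on the table (Theorem~\ref{ar} treats all $\syz^i$ simultaneously, and Theorem~\ref{main1} needs grade hypotheses you cannot verify for $R/\m$ once $\dim R\ge 2$). So as written your argument has a gap in that case --- but it is the same gap the paper leaves, not a defect relative to it. If you want to close it cleanly, the honest route is to revisit Theorem~\ref{rigid} itself rather than to push extension closedness through a base change.
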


\begin{proof}
It is shown in \cite[Theorem 3.8]{EG} (see also \cite[Lemma 1.3]{LW}) that (3) implies (2).
It is straightforward to check that (2) implies (1).
Let us show that (1) implies (3).
Thanks to Corollary \ref{mcor}, $R$ is locally Gorenstein in height $n-1$.
We may assume $n>1$.
Applying Theorem \ref{rigid} to $t:=n-1$ yields that $R$ is locally Gorenstein in height at most $n-1$.
\end{proof}

%%%%%%%%%%%%%%%%%%%%%%%%%%%%%%%%%%%%%%%%%%%%%%%%%%%%%%%%%%%%%%%%%
\begin{ac}
The second author thanks Tokuji Araya for valuable discussions.
\end{ac}
%%%%%%%%%%%%%%%%%%%%%%%%%%%%%%%%%%%%%%%%%%%%%%%%%%%%%%%%

%%%%%%%%%%%%%%%%%%%%%%%%%%%%%%%%%%%%%%%%%%%%%%%%%%%%%%%%
\end{document}